\newtheorem{theorem}{Theorem}[section]
\newtheorem{proposition}[theorem]{Proposition}
\newtheorem{corollary}[theorem]{Corollary}
\newtheorem{lemma}[theorem]{Lemma}
\theoremstyle{definition}
\theoremstyle{remark}
\newtheorem{remark}{Remark}[theorem]
\newcommand\NN{\mathbb N}
\newcommand\ZZ{\mathbb Z}
\newcommand\ot{\otimes}
\renewcommand\to{\longrightarrow}
\renewcommand\phi{\varphi}
\newcommand\eps{\varepsilon}
\newcommand\R{\mathcal R}
\DeclareMathOperator\Id{\mathsf{Id}}
\renewcommand\k{\Bbbk}
\DeclareMathOperator\Mod{\mathsf{Mod}}
\DeclareMathOperator\Hom{\mathsf{Hom}}
\DeclareMathOperator\Ext{\mathsf{Ext}}
\DeclareMathOperator\id{id}
\DeclareMathOperator\Add{Add}
\begin{document}

\title{Behaviour of injective dimension with respect to regradings}
\author{Andrea Solotar and Pablo Zadunaisky}
\address{IMAS and Dto. de Matem\'{a}tica, Facultad de Ciencias Exactas y Naturales,
	Universidad de Buenos Aires, Ciudad Universitaria, Pabell\'{o}n 1, 1428) Buenos Aires,
	Argentina}
\email{asolotar@dm.uba.ar, pzadub@dm.uba.ar}
\thanks{This work has been supported by the projects  UBACYT 20020130100533BA, PIP-CONICET
	112--201101--00617, PICT 2011--1510 and MATHAMSUD-GR2HOPF. The first author is a
	research member of CONICET (Argentina).}
\date{}

\begin{abstract}
Given a left noetherian $\k$-algebra $A$ graded by a group $G$, an injective object $I$ in
the category of $G$-graded $A$-modules and a morphism from $G$ to another group $G'$, we
provide bounds for the injective dimension of $I$ as a $G'$-graded $A$-module. For this,
we use three {\em change of grading} functors. Most of the constructions concerning these
functors work in the context of $H$-comodule algebras, where $H$ is a Hopf algebra, so we
develop them in this general context.
\end{abstract}

\maketitle

\noindent 2010 MSC: 16W50, 16D50, 16E10, 16T15. 

\noindent Keywords: injective dimension, gradings.

%%%%%%%%%%%%%%%%%%
\section{ Introduction}

In \cite{vdB}, M. Van den Bergh states that if $A$ is a left noetherian $\NN$-graded
connected algebra over a field $\k$ and $I$ is an injective object in the category of
$\ZZ$-graded $A$-modules, then $I$ has injective dimension at most one when considered as an
$A$-module. He leaves the proof of this fact as ``a pleasant exercise in homological
algebra''. 
In this note we prove a general version of this result: 
we  provide bounds for the injective dimension
of a graded injective module when the grading changes.
This is useful for
example when one wants to study how $A$-modules behave with respect to a property which
is valid for graded $A$-modules. 
Notice that B. Fossum and H. Foxby had
already proven  in \cite{FF}*{Theorem 4.10} the statement in Van den Bergh'a article for
$A$ commutative, using localization techniques. Also, A. Yekutieli gave in \cite{Yek} a
detailed proof of Van den Bergh's statement. Our proof is completely unrelated to the ones
found in either of these references and it deals with arbitrary grading groups and change
of gradings in the non necessarily commutative case.

Just to get an idea of the situation, let us start by looking at a possible solution to
Van den Bergh's exercise. Let $A$ be a $\ZZ$-graded algebra and let $N$ be a left
$A$-module. We can turn $N \ot_{\k } \k[t,t^{-1}]$ into a
$\ZZ$-graded left $A$-module, with the action of an element $a \in A$ of degree $l \in
\ZZ$ given by $a \cdot (n \ot t^r) = an \ot t^{r+l}$ for all $n \in N$ and $r \in \ZZ$.
There is an $A$-linear surjective map $N \ot_{\k } \k[t,t^{-1}] \to N$, induced by the
projection $\k[t,t^{-1}] \to \k[t,t^{-1}]/(t-1)\cong \k$, with kernel $N \ot_{\k } (t-1)
\k[t,t^{-1}]$. Thus we obtain an exact sequence of $A$-modules
\begin{align*} 
	\xymatrix{ 
		0 \ar[r] & N \ot_{\k } \k[t,t^{-1}] \ar[r]^{\cdot(t-1)} & N \ot_{\k } \k[t,t^{-1}] \ar[r] &N
		\ar[r] &0.  
	}	 
\end{align*}
As a sequence of $\k$-modules, it is the tensor product of $N$ with the minimal projective
resolution of $\k \cong\k[t,t^{-1}]/(t-1)$ as a $\k[t,t^{-1}]$-module.

Let $I$ be an injective object in the category of $\ZZ$-graded left $A$-modules. Since 
$N\ot_{\k } \k[t,t^{-1}]$ is a $\ZZ$-graded left $A$-module, it is natural to ask whether the fact
that $I$ is \emph{graded} injective implies that $\Ext_A^i(N \ot_{\k } \k[t,t^{-1}], I) = 0$ for
all $i >0$; in Prop. \ref{caso-vdb},
we prove that this holds if $A$ is noetherian. Thus, in this case we have a resolution of
length $1$ of $N$ by left $A$-modules which are acyclic with respect to the functor
$\Hom_A(-,I)$; this proves that $\Ext_A^2(N,I) = 0$. Since $N$ is arbitrary, we deduce
that the injective dimension of $I$ in the category of $A$-modules is at most $1$.

Motivated at first by the example of change of grading considered in \cite{RZ}, the
objective of this note is to put this result in a more general perspective, showing how
injective dimension changes when we change the grading group over a fixed algebra. We
show that the general case follows the same pattern as the $\ZZ$-graded case, and requires
little more than general homological algebra. For this, we define three change of grading
functors. These functors arise in the more general situation of $A$-modules endowed with
two different comodule structures over two Hopf algebras related by a morphism.

We prove the following result.

{\bf Theorem:}  Let $A$ be a noetherian $G$-graded $\k$-algebra and let $\phi:G\to G'$ be
a morphism of groups. Let $n$ be the projective dimension of $\k G'$ as a $G'$-graded left
$\k G$-module. Given a $G$-graded injective $A$-module $I$, the injective dimension of $I$
when considered as a $G'$-graded $A$-module through $\phi$ is less than or equal to $n$.

In Section \ref{principal} we first recall some definitions and fix notations.  Afterwards
we define the change of grading functors in the Hopf algebra setting and prove useful
properties about them.

In Section \ref{groups} we specialize to the group-graded
situation and we prove our main result.

In Section \ref{bounds} we obtain bounds for the injective dimension of a graded module
when the grading changes through a group morphism.

Throughout this work $\k$ denotes a commutative ring with unit and $A$ is a $\k$-algebra,
projective as $\k$-module. All unadorned tensor products are over $\k$, and all Hopf
algebras will also be projective as $\k$-modules. 

We thank Mariano Su\'arez-Alvarez for a careful reading of a previous version of this article.
%%%%%%%%%
\section{ Graded algebras and modules} \label{principal}

\subsection{ Some definitions and notations} Let $A$ be a $\k$-algebra and let $G$ be a
group. The algebra $A$ is $G$-{\em graded} if it can be decomposed as the direct sum of
sub-$\k$-modules $A_g$ indexed by $G$ such that $A_g A_{g'} \subseteq A_{gg'}$. A $G$-{\em
graded module} $M$ over $A$ is an $A$-module with a decomposition as a direct sum of
sub-$\k$-modules $M_g$ such that $A_g M_{g'} \subseteq M_{gg'}$. The $\k$-module $M_g$ is
called the \emph{homogeneous component} of $M$ of degree $g$. We consider $\k$ to be a
$G$-graded algebra with $\k_{1_G} =\k$ and all other components equal to $0$.

An $A$-linear morphism $f:N \to M$ between $G$-graded left $A$-modules is said to be
homogeneous of degree $g$ if $f(N_{g'}) \subseteq M_{g'g}$ for all $g'$ in $G$. 
A $G$-grading on $A$ is equivalent to a right $\k G$-comodule algebra structure on $A$, and a
$G$-graded left $A$-module is a left $A$-module with a compatible structure of $\k
G$-comodule, that is, the following diagram commutes:

\[\xymatrixcolsep{5pc}\xymatrix{
A\otimes M \ar[r]^{\mu_M} \ar[d]^{\rho_{A\otimes M}}& M \ar[d]^{\rho_M}\\ 
A\otimes M\otimes kG \ar[r]^{\mu_m\otimes Id} & M\otimes \k G}
\]

where $\mu_M$ denotes the action of $A$ on $M$ and $\rho_M$, $\rho_{a\otimes M}$ denote the respective $\k G$ coactions.
Given a Hopf algebra $H$, we will denote by ${}_A\Mod^H$ the category of
left $A$-modules with compatible structure of right $H$-comodules. 

The morphisms in ${}_A\Mod^{\k G}$, which we shall denote $\Hom^{\k
G}_{A-}(-, -)$ are homogeneous of trivial degree. For any $f \in \Hom^{\k G}_{A-}(N,M)$
and $g\in G$ we write $f_g$ for the $\k$-linear map $f_g: N_g \to M_g$ obtained by
restriction and corestriction of $f$, and we call it the \emph{homogeneous component of $f$ in
degree $g$}. Notice that $f$ is determined by its homogeneous components.

Given a $G$-graded left $A$-module $M$ and $g\in G$, we denote by $M(g)$ the $A$-module
$M$ with a new $G$-grading, whose homogeneous components are given by $M(g)_{g'} =
M_{g'g}$. If $f\in \Hom^{\k G}_{A-}(M,M')$, we define $f(g): M(g) \to
M'(g)$ to be the morphism whose underlying function is the same as that of $f$. It is easy
to see that $f(g)$ is homogeneous of trivial degree. Thus we obtain the
\emph{$g$-shift functor}, denoted by $\Sigma_g: {}_AMod^{\k G} \to {}_AMod^{\k G}$.
One can check that $\Sigma_g \circ \Sigma_{g^{-1}} = \Id$, so the $g$-shift functor
is an automorphism of ${}_AMod^{\k G}$.

An obvious example of a $G$-graded algebra is the group algebra $\k G$. We will write
$u_g$ for the canonical generator of the homogeneous component of $\k G$ of degree $g$,
that is $(\k G)_g = \k u_g$.

\subsection{Change of grading functors} \label{change-of-grading-functors}

Let $H$ and $H'$ be two Hopf algebras and let $\phi: H \to H'$ be a
morphism of Hopf algebras.  Suppose we are given a right $H$-comodule algebra $A$ with
structure morphism $\rho_A^H: A \to A \ot H$. The composition $1 \ot \phi \circ \rho_A^H:
A \to A \ot H'$ turns $A$ into an $H'$-comodule algebra. Of course the same idea works
with left $H$-comodule algebras; in particular, we consider $H$ as a left $H'$-comodule
algebra with structure map $\phi \ot 1 \circ \Delta_{H}$.

The morphism $\phi$ induces two functors \begin{align*} \phi_!: {}_A\Mod^{H} &\to
{}_A\Mod^{H'} & \phi^*: {}_A\Mod^{H'} \to {}_A\Mod^{H} \end{align*} which we now define.

Given an object $M$ of ${}_A\Mod^{H}$ with structure morphism $\rho_M^{H}: M \to M \ot H$,
the object $\phi_!(M)$ coincides with $M$ as $A$-module, while its structure morphism is
$\rho_{\phi_!(M)}^{H'} = 1 \ot \phi \circ \rho_M^{H}: M \to M \ot H'$. Furthermore, given
a morphism $f: M \to M'$ in ${}_A\Mod^{H}$, we set $\phi_!(f) = f$. It is easy to check
that the functor $\phi_!$ is well defined.  We say that $\phi_!$ is obtained by restriction
along $\phi$. 

Now, given an object $N$ of ${}_A\Mod^{H'}$, we define $\phi^*(N)$ as $N \square_{H'} H$,
where $\square_{H'}$ is the cotensor product of $H'$-comodules; here we consider $H$ as a
left $H'$-comodule through $\phi$. Notice that $N \otimes H$ has a left $A \ot H$-module
structure. Since $\rho^{H}_A: A \to A \otimes H$ is a ring homomorphism, it induces a left
$A$-module structure on $N \ot H$. The subspace $\phi^*(N) = N
\square_{H'} H \subseteq N \ot H$ is an $A$-submodule of $N \ot H$.  For any morphism $f: N
\to N'$ in ${}_A\Mod^{H'}$, we set $\phi^*(f): \phi^*(N) \to \phi^*(N')$ to be  $f \ot
\Id: N \square_{H'} H \to N' \square_{H'} H$, that is the restriction and corestriction of
$f \ot \Id$.

The cotensor product has been defined originally in \cite{EilMoo}. 
The functors $\phi_!$ and $\phi^*$ are $A$-equivariant versions of those introduced in \cite{Doi}*{1.2}.
Proposition \ref{P:adjoint} below corresponds to Proposition 6 of that article.

\begin{proposition} 
\label{P:adjoint} 
The functor $\phi^*$ is right adjoint to $\phi_!$.
\end{proposition}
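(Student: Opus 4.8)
The plan is to construct a natural bijection between $\Hom^{H'}_{A-}(\phi_!(M), N)$ and $\Hom^{H}_{A-}(M, \phi^*(N))$ for objects $M$ of ${}_A\Mod^H$ and $N$ of ${}_A\Mod^{H'}$. First I would recall the cotensor description $\phi^*(N) = N \sq_{H'} H$, the equalizer of the two maps $N \ot H \to N \ot H' \ot H$ given by $\rho_N^{H'} \ot \Id_H$ and $\Id_N \ot (\phi \ot \Id_H)\circ\Delta_H$. The universal property of this equalizer says that to give an $A$-linear $H$-colinear map $M \to \phi^*(N)$ is the same as to give an $A$-linear $H$-colinear map $g: M \to N \ot H$ whose two composites into $N \ot H' \ot H$ agree. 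Here $N \ot H$ carries the $A$-action induced along $\rho_A^H: A \to A \ot H$ and the $H$-coaction $\Id_N \ot \Delta_H$ on the right tensor factor.

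The core of the argument is the classical ``fundamental theorem of comodules'' style bijection. To a map $g: M \to N \ot H$ as above, I would associate $\alpha(g) := (\Id_N \ot \eps_H)\circ g : M \to N$, and conversely to a map $h: M \to N$ I would associate $\beta(h) := (h \ot \Id_H)\circ \rho_M^H : M \to N \ot H$. The routine-but-necessary verifications are: (i) that $\beta(h)$ always lands in the cotensor product $N\sq_{H'}H$ — this uses that $h$ intertwines the $H$-coaction on $M$ with the $H'$-coaction on $N$ after pushing forward along $\phi$, i.e. exactly the condition that $h$ be a morphism out of $\phi_!(M)$; (ii) that $\alpha$ and $\beta$ are mutually inverse, using the counit axiom $(\Id \ot \eps)\circ\rho_M^H = \Id_M$ and, in the other direction, the equalizer/coaction-coassociativity identity that pins down elements of the cotensor product; (iii) that $\beta(h)$ is $A$-linear for the induced action on $N \ot H$, which follows because $h$ is $A$-linear and $\rho_M^H$ is compatible with the $A$-action (the comodule-algebra compatibility square in ${}_A\Mod^H$), and similarly that $\beta(h)$ is $H$-colinear, which is coassociativity of $\rho_M^H$. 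Naturality in both variables is then immediate from the formulas for $\alpha$ and $\beta$.

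The step I expect to be the main obstacle is bookkeeping the interplay of the $A$-module and $H$-comodule structures simultaneously, especially checking that $\beta(h): M \to N \ot H$ is $A$-linear when $N \ot H$ is given the $A$-action \emph{induced through $\rho_A^H$} rather than the diagonal one; one must carefully track where the comodule-algebra axiom for $A$ and the compatibility axiom for $M$ get used, and confirm that the image genuinely sits inside $\phi^*(N)$ and not merely in $N \ot H$. Sweigart-style Sweedler notation $\rho_M^H(m) = m_{(0)} \ot m_{(1)}$ keeps this manageable, but it is the one place where a sign of carelessness produces a map that is colinear but not $A$-linear or vice versa. Everything else — the two triangle identities for the unit $\eta_M : M \to \phi^*(\phi_!(M))$ and counit $\eps_N: \phi_!(\phi^*(N)) \to N$ of the adjunction, should one prefer to phrase it that way — reduces to the counit axiom and coassociativity and is genuinely routine.
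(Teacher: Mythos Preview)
Your proposal is correct and is essentially the same argument as the paper's, just unpacked: the paper simply names the unit $\iota_M: M \to \phi^*(\phi_!(M))$ induced by $\rho_M^H$ and the counit $\eps_N: \phi_!(\phi^*(N)) \to N$ given by $\Id \sq_{H'}\, \phi$ followed by $N \sq_{H'} H' \cong N$, and asserts these satisfy the triangle identities. Your maps $\beta$ and $\alpha$ are exactly the Hom-set bijections obtained from this unit and counit (indeed $\eps_N$ restricts to $\Id_N \ot \eps_H$ on $N \sq_{H'} H$, since $\eps_{H'}\circ\phi = \eps_H$), so the two presentations coincide; your version has the virtue of spelling out the $A$-linearity and colinearity checks that the paper leaves implicit.
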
 
\begin{proof} 
	The image of the coaction map $\rho_M^{H}: M \to M \ot H$
	is contained in $M \square_{H'} H$, and hence induces a map $\iota_M: M \to
	\phi^*(\phi_!(M))$, which is natural in $M$ and compatible with the action of $A$.
	Also, there is a natural transformation	$\eps_N: \phi_!(\phi^*(N)) \to N$ given by
	the composition of $Id \square_{H'} \phi$ with the canonical isomorphism $N
	\square_{H'} H' \cong N$, which is also $A$-linear. These are respectively the unit and
	the counit of the adjoint pair $(\phi_!, \phi^*)$.  
\end{proof}

By definition the functor $\phi_!$ is exact and reflects exactness, meaning that a complex in
${}_A\Mod^{H}$ is exact if and only if its image by $\phi_!$ is exact. As shown in
\cite{Doi}*{Proposition 5}, the functor $\phi^*$ is exact if $H$ is an injective
$H'$-comodule, for example if it is free. By standard properties of adjoint functors, we
obtain the following corollary. For the proof of the second statement, see \cite{Wei}*{Prop. 2.3.10}.

\begin{corollary} 
The functor $\phi^*$ sends injective objects to injective objects. Furthermore, if $H$
is an injective $H'$-comodule, the functor $\phi_!$ sends projective objects to
projective objects.  
\end{corollary}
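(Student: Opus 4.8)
The plan is to deduce both assertions formally from the adjunction $(\phi_!,\phi^*)$ of Proposition \ref{P:adjoint}, together with the exactness properties recorded above; the whole argument is the standard interaction between adjoint functors and (co)homological properties, and I expect no real difficulty.

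For the first assertion, I would take an injective object $I$ of ${}_A\Mod^{H'}$ and argue that $\phi^*(I)$ is injective in ${}_A\Mod^{H}$ by checking that $\Hom_{{}_A\Mod^{H}}(-,\phi^*(I))$ is an exact functor. The adjunction gives a natural isomorphism
\[
\Hom_{{}_A\Mod^{H}}(-,\phi^*(I)) \cong \Hom_{{}_A\Mod^{H'}}(\phi_!(-),I),
\]
and the right-hand side is the composite of the exact functor $\phi_!$ with the functor $\Hom_{{}_A\Mod^{H'}}(-,I)$, which is exact precisely because $I$ is injective. Hence $\phi^*(I)$ is injective.

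For the second assertion I would assume that $H$ is an injective $H'$-comodule, so that $\phi^*$ is exact by \cite{Doi}*{Proposition 5}, take a projective object $P$ of ${}_A\Mod^{H}$, and dualize the previous argument: the adjunction yields
\[
\Hom_{{}_A\Mod^{H'}}(\phi_!(P),-) \cong \Hom_{{}_A\Mod^{H}}(P,\phi^*(-)),
\]
which is the composite of the exact functor $\phi^*$ with the exact functor $\Hom_{{}_A\Mod^{H}}(P,-)$; therefore $\Hom_{{}_A\Mod^{H'}}(\phi_!(P),-)$ is exact and $\phi_!(P)$ is projective. This is exactly the principle recorded in \cite{Wei}*{Prop. 2.3.10}.

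The only point that requires any care is bookkeeping: one must keep straight in which of the two categories each object lives and on which side of the adjunction isomorphism the functors $\phi_!$ and $\phi^*$ appear, so that the Hom-functors are composed in the correct order. Beyond that, there is no genuine obstacle.
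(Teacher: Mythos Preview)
Your argument is correct and is precisely the ``standard properties of adjoint functors'' argument the paper invokes; the paper gives no detailed proof, merely pointing to \cite{Wei}*{Prop.~2.3.10} for the second statement, which is exactly the principle you spell out.
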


The hypothesis of the last part of the corollary is satisfied for example when $H'$ is a
group algebra over a field, since in this case $H'$ is cosemisimple and thus every
comodule is injective.

%%%%%%%%%%%%%%%%%%%%
For the rest of this subsection, $\k$ will be a field.
Let $L=\{x\in H: \rho^{H'}_H(x)= x\otimes 1\}$ be the subalgebra of coinvariants of $H$ as left $H'$-comodule. Recall that $L
\subseteq H$ is called \emph{cleft} if there exists a convolution-invertible left
$H'$-comodule morphism $\gamma: H' \to H$ 
and in that case there is a left $H'$-colinear, right $L$-linear isomorphism of Hopf algebras $H\to H' {}_\sigma\# L$
and the extension $L\subseteq H$ is $H'$-Galois; see \cite{M}*{Theorem 7.2.2} for details.

By \cite{SURVEY}*{5.1}, the category ${}_A\Mod^{H'}$ is a Grothendieck category. If the extension $L\subseteq H$ is cleft, then $H$ is a free $H'$-comodule, so it
is injective and in particular $\phi^*: {}_A\Mod^{H'} \to {}_A\Mod^{H}$ is an exact
functor, hence preserves colimits. Thus, by Freyd's Adjoint Functor Theorem, $\phi^*$
has a right adjoint, which we denote by $\phi_*$.

The following corollary is also a consequence of adjointness.

\begin{corollary} 
\label{C:COG-functors-properties} 
If the extension  $L\subseteq H$ is $H'$-cleft, then the functors $\phi_!$ and $\phi^*$ send
projective objects to projective objects.
\end{corollary}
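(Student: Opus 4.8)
The plan is to deduce Corollary \ref{C:COG-functors-properties} from the previously established facts by showing that the cleftness hypothesis lets us verify the hypotheses of both the Corollary preceding it and Proposition \ref{P:adjoint}. The key observation is the sentence already recorded in the excerpt: if $L \subseteq H$ is $H'$-cleft, then $H$ is a free $H'$-comodule (via the isomorphism $H \cong H' {}_\sigma\# L$ of left $H'$-comodules), hence an injective $H'$-comodule. So the statement about $\phi_!$ is immediate: by the earlier Corollary, whenever $H$ is an injective $H'$-comodule the functor $\phi_!$ sends projective objects to projective objects. Nothing further is needed for that half.

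For the statement about $\phi^*$, first I would recall that $\phi^*$ has been shown to be exact when $H$ is an injective $H'$-comodule (citing \cite{Doi}*{Proposition 5} as in the excerpt), so under the cleftness hypothesis $\phi^*$ is exact. Next I would invoke the paragraph immediately before the corollary: since ${}_A\Mod^{H'}$ is a Grothendieck category and $\phi^*$ is exact, it preserves all colimits, so by Freyd's Adjoint Functor Theorem it admits a right adjoint $\phi_*$. An exact functor that is a \emph{left} adjoint automatically sends projective objects to projective objects, by the same standard argument referenced as \cite{Wei}*{Prop. 2.3.10} (the adjoint of an exact functor preserves the relevant projectivity/injectivity class). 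Concretely: if $P$ is projective in ${}_A\Mod^{H'}$ and $\phi^*(P) \to X$ is a map with $X \twoheadrightarrow Y$ an epimorphism in ${}_A\Mod^{H}$, adjunction turns the lifting problem for $\phi^*(P)$ into a lifting problem for $P$ against $\phi_*(X) \to \phi_*(Y)$, which is an epimorphism because $\phi_*$ is right adjoint to the exact functor $\phi^*$ and therefore preserves epimorphisms; projectivity of $P$ solves it, and transporting back solves the original problem.

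I would then assemble these two observations into a short proof: state that cleftness implies $H$ is free, hence injective, as an $H'$-comodule; apply the earlier corollary for $\phi_!$; and for $\phi^*$ combine its exactness with the existence of the right adjoint $\phi_*$ together with the general principle that a left adjoint which is exact preserves projectives. The main obstacle, such as it is, is purely expository: making sure the logical dependence on the cleftness hypothesis is used only through the one consequence "$H$ is $H'$-cofree", and being careful that the projectivity-preservation argument for $\phi^*$ is phrased as "left adjoint with exact... so its right adjoint preserves epis", rather than accidentally conflating it with the injective-preservation statement for $\phi^*$ that was proved earlier. No new computation is required; everything follows formally from adjointness and the structure theorem for cleft extensions cited from \cite{M}*{Theorem 7.2.2}.
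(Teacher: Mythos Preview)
Your treatment of $\phi_!$ is fine and matches the paper: cleftness gives $H$ free (hence injective) as a left $H'$-comodule, so the earlier corollary applies directly.

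The argument for $\phi^*$, however, contains a genuine error. You assert that ``an exact functor that is a left adjoint automatically sends projective objects to projective objects,'' and in the concrete version you claim that $\phi_*$ ``is right adjoint to the exact functor $\phi^*$ and therefore preserves epimorphisms.'' Neither implication is valid. The statement in \cite{Wei}*{Prop.~2.3.10} is that a \emph{left adjoint to an exact functor} preserves projectives (dually, a right adjoint to an exact functor preserves injectives); it says nothing about an exact left adjoint. And a right adjoint to an exact functor need not preserve epimorphisms: for instance, restriction along $\ZZ \twoheadrightarrow \ZZ/2\ZZ$ is exact, but its right adjoint $\Hom_\ZZ(\ZZ/2\ZZ,-)$ sends the epimorphism $\ZZ \twoheadrightarrow \ZZ/2\ZZ$ to $0 \to \ZZ/2\ZZ$.

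What the adjunction $(\phi^*,\phi_*)$ actually gives you is: $\phi^*$ preserves projectives \emph{provided} $\phi_*$ is exact. So the missing ingredient is exactness of $\phi_*$, which you have not addressed. The paper's one-line justification (``also a consequence of adjointness'') is presumably invoking exactly this, taking exactness of $\phi_*$ for granted; in the group-algebra case treated in Section~\ref{groups} this is clear from the explicit product description of $\phi_*$, but in the general cleft setting you would need to supply an argument. Either establish that $\phi_*$ is exact under the cleftness hypothesis, or find a direct description of projectives in ${}_A\Mod^{H'}$ and check that $\phi^*$ preserves them.
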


%%%%%%%%%%%%%%%%%
We set some more notation. Given a Hopf algebra $H$ and an object $N$ of ${}_A\Mod^{H}$ we
denote by $\Add(N)$ the full subcategory of ${}_A\Mod^{H}$ whose objects are direct
summands in ${}_A\Mod^{H}$ of  coproducts of copies of $N$. In other words $\Add(N)$ is
the smallest full subcategory of ${}_A\Mod^{H}$ containing $N$ and closed by direct sums
and direct summands.

\begin{proposition} 
\label{resolution} 
Suppose $L \subseteq H$ is cleft and $H'$ is cocomutative. 
If $N$ is an object of ${}_A\Mod^{H'}$, then there exists a resolution $S^\bullet \to N$, with $S^i$ in
$\Add (\phi_!(\phi^*(N)))$ for all $i \geq 0$. Moreover, if $n$ is the projective
dimension of $H'$ in ${}_H\Mod^{H'}$, then $S^i = 0$ for all $i > n$.
\end{proposition}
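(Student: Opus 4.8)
The plan is to transfer a finite projective resolution of $H'$ inside ${}_H\Mod^{H'}$ to a resolution of $N$ inside ${}_A\Mod^{H'}$, along an exact additive functor that carries $H$ to $\phi_!\phi^*(N)$ and $H'$ to $N$.

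Since $L\subseteq H$ is cleft it is an $H'$-Galois extension with $L=H^{\co H'}$, and $H$ is free, hence injective, as a left $H'$-comodule (cf. \cite{M}); by the structure theorem for faithfully flat Hopf--Galois extensions there is an equivalence ${}_H\Mod^{H'}\simeq{}_L\Mod$, given by $M\mapsto M^{\co H'}$ with quasi-inverse $V\mapsto H\otimes_L V$, under which $H$ corresponds to the free module $L$ and $H'$ to $\k$. In particular $H$ is a projective generator of ${}_H\Mod^{H'}$, and the projective dimension $n$ of $H'$ there equals $\pd_L\k$. Consequently $H'$ admits a resolution $P_\bullet\to H'$ in ${}_H\Mod^{H'}$ with each term in $\Add(H)$ and $P^i=0$ for $i>n$: take $P^i$ free for $i<n$ and $P^n$ a direct summand of a free module, which is possible precisely because $\pd_L\k=n$.

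Next I would build the transfer functor. Using the cleft trivialization $H\cong H'{}_\sigma\# L$ one identifies $\phi^*(N)=N\square_{H'}H$ with $N\otimes L$ equipped with a left $A$-module and right $L$-module structure twisted by the $2$-cocycle $\sigma$; in particular $\phi^*(N)$ is free, hence faithfully flat, as a right $L$-module, and $\phi^*(N)\otimes_L\k\cong N$. Let $F\colon{}_H\Mod^{H'}\to{}_A\Mod^{H'}$ be the composite of the equivalence ${}_H\Mod^{H'}\simeq{}_L\Mod$ with the functor $\phi^*(N)\otimes_L(-)$, where the $A$-action is the one carried by $\phi^*(N)$ and the $H'$-coaction is inherited from $\phi_!\phi^*(N)$; checking that this coaction descends to the tensor product over $L$ is the step that uses cocommutativity of $H'$. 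Then $F$ is exact, because $\phi^*(N)$ is flat over $L$ — equivalently, because over the field $\k$ the functor $F$ is $N\otimes_\k(-)$ up to a twist of the $A$-action; $F$ is additive, so $F(\Add(H))\subseteq\Add(F(H))=\Add(\phi_!\phi^*(N))$; and $F(H')\cong\phi^*(N)\otimes_L\k\cong N$ naturally.

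Applying $F$ to the resolution $P_\bullet\to H'$ then produces an exact complex $S^\bullet\to N$ with $S^i:=F(P^i)\in\Add(\phi_!\phi^*(N))$ for all $i$ and $S^i=0$ for $i>n$, which is the assertion. I expect the main obstacle to be the construction of $F$: pinning down the twisted $A$-module and $L$-module structures on $\phi^*(N)\cong N\otimes L$ coming from the cleft cocycle, and in particular verifying that the $H'$-comodule structure of $\phi_!\phi^*(N)$ is compatible with the right $L$-action, so that $F$ genuinely takes values in ${}_A\Mod^{H'}$. This is the point where both the cleft hypothesis and the cocommutativity of $H'$ are essential; the remaining ingredients — the adjunction $(\phi_!,\phi^*)$, exactness of tensoring over a field, the structure theorem for cleft extensions, and the invariance of projective dimension under an equivalence of categories — are routine.
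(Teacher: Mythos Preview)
Your approach is essentially the one in the paper. Both arguments pick a projective resolution $P^\bullet\to H'$ in ${}_H\Mod^{H'}$ (using the Schneider equivalence ${}_H\Mod^{H'}\simeq{}_L\Mod$ to guarantee enough projectives and to bound the length by $n$) and then push it forward along an exact additive functor ${}_H\Mod^{H'}\to{}_A\Mod^{H'}$ that sends $H$ to $\phi_!\phi^*(N)$ and $H'$ to $N$. The only difference is the description of that functor: the paper takes it to be the cotensor $N\,\square_{H'}\,(-)$ directly (the $H'$-coaction coming from $N$, which is where cocommutativity is invoked, and exactness coming from the fact that each $P^i$ is coflat over $H'$), whereas you compose the equivalence with $\phi^*(N)\otimes_L(-)$. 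Since $T\cong H\otimes_L T^{\co H'}$ in ${}_H\Mod^{H'}$, one has $N\,\square_{H'}\,T\cong(N\,\square_{H'}\,H)\otimes_L T^{\co H'}=\phi^*(N)\otimes_L T^{\co H'}$, so the two functors agree; the paper's formulation simply bypasses the cleft-cocycle bookkeeping you flag as the main obstacle.
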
 
\begin{proof} 
Since $H'$ is cocommutative, given $T \in {}_{H}\Mod^{H'}$, the $\k$-module $N	\square_{H'}
T$ is an $H'$-comodule and it has an $A$-module structure given by $a(n \ot t) =
a_0n \ot a_1t$, compatible with	the $H'$-comodule structure.	We can thus view $N \square_{H'} -$
as a functor from ${}_{H}\Mod^{H'}$ to ${}_{A}\Mod^{H'}$.

By \cite{M}*{Theorem 8.5.6} and originally \cite{Sch90}, if the antipode of $H'$ is
bijective -and this is the case here-, then there is an equivalence of categories $\Theta: {}_L\Mod \to
{}_{H}^{H'}\Mod \cong {}_{H}\Mod^{H'}$ given by $\Theta(T) = H \ot_L T$, so
${}_{H}\Mod^{H'}$ has enough projectives and each projective object is a direct summand
of a direct sum of copies of $H$.  Fix a projective resolution $P^\bullet \to H'$ in
${}_{H}\Mod^{H'}$. Since each $P^i$ is a direct summand of a free module in
${}_{H}\Mod^{H'}$ and $H$ is free as an $H'$-comodule, this is a flat resolution of $H'$
so, if we consider the
complex $N \square_{H'} P^\bullet$, it is acyclic in positive degrees and its homology in
degree zero is isomorphic to $N \square_{H'} H' \cong N$. For each $P^i$ choose $Q^i$ in
${}_{H}\Mod^{H'}$ such that $P^i \oplus Q^i$ is free of rank $n_i\in \mathbb{N}\cup \{\infty\}$, so $(N \square_{H'}
P^i) \oplus (N \square_{H'} Q^i) \cong N \square_{H'} (P^i \oplus Q^i)$, which is
isomorphic as object of ${}_{A}\Mod^{H'}$ to the direct sum of $n_i$ copies of
$\phi_!(\phi^*(N))$. Thus setting $S^\bullet = N \square_{H'} P^\bullet$ the proof is
complete.
\end{proof}

If $M\in {}_{A}\Mod^{H}$, there is an $A$-linear map $\zeta: M \ot L \to
\phi^*(\phi_!(M))$ such that $m \ot l \mapsto m_0 \ot m_1l$. This map is injective since
the corestriction of $\eta: \phi^*(\phi_!(M)) \to M \ot H$ defined as $m \ot g \mapsto m_0
\ot S(m_1)g$ is a left inverse of $\zeta$. Clearly $\zeta$ is an isomorphism if $H$ and
$M$ are finite-dimensional, and it is easily checked that this also holds if $H$ and $H'$
are group algebras. We do not know if this holds in more general situations.

\section{The group algebra case} 
\label{groups}

We now focus on the case in which $H=\k G$ and $H'=\k G'$ are
group algebras and $\phi:H \to H'$ is induced by a group morphism $\hat \phi: G \to
G'$. In this case, we have $L = \k \hat L$ with $\hat L = \ker \hat \phi$. 
As a particular case of the previous constructions, the morphism $\hat \phi: G\to G'$ induces a
$G'$-grading on $A$ via the functor $\phi_!$, the homogeneous $G'$-components are \[ A_h = \bigoplus_{g \in
\phi^{-1}(h)} A_g \] for $h \in G'$.

\begin{lemma} \label{direct-sum} Let $M$ be an object of ${}_A\Mod^{H}$. The morphism
$\bigoplus_{l \in \hat L} M[l] \to \phi^*(\phi_!(M))$ sending $m \in M[l]_g$ to $ m \ot g
\in \phi^*(\phi_!(M))$ is an isomorphism in ${}_A\Mod^{H}$, which is natural.  \end{lemma}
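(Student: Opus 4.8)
The plan is to prove the lemma by unwinding every definition in the group algebra case and then checking the stated map degree by degree.

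First I would describe $\phi^*(\phi_!(M))$ explicitly. Since $H=\k G$ and $H'=\k G'$, the cotensor product $\phi_!(M)\square_{H'}H$ is the equalizer inside $M\ot\k G$ of the maps $\rho^{H'}_{\phi_!(M)}\ot\Id$ and $\Id\ot\rho^{H'}_{H}$ into $M\ot\k G'\ot\k G$. Evaluating on a homogeneous element $m\in M_a$ and a basis element $u_g$, the first map gives $m\ot u_{\hat\phi(a)}\ot u_g$ and the second gives $m\ot u_{\hat\phi(g)}\ot u_g$, so $m\ot u_g$ lies in the cotensor product exactly when $\hat\phi(a)=\hat\phi(g)$, i.e. when $a\in g\hat L$. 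Hence
\[
\phi^*(\phi_!(M)) \;=\; \bigoplus_{g\in G}\Bigl(\bigoplus_{a\in g\hat L}M_a\Bigr)\ot u_g \;\subseteq\; M\ot\k G,
\]
and one reads off that the right $H$-comodule structure (induced by $\Id_M\ot\Delta_{\k G}$) places $M_a\ot u_g$ in $G$-degree $g$, while the $A$-action induced by $\rho_A^{H}$ sends $x\in A_\gamma$ and $m\in M_a$ to $xm\ot u_{\gamma g}$.

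Next I would observe that the map of the statement is a well-defined morphism in ${}_A\Mod^{H}$. Writing the $G$-degree $g$ part of $\bigoplus_{l\in\hat L}M[l]$ as $\bigoplus_{l\in\hat L}M[l]_g$, the assignment $m\mapsto m\ot u_g$ lands in the degree $g$ part of $\phi^*(\phi_!(M))$ by the computation above, so the map is homogeneous of trivial degree; and $A$-linearity amounts to matching the identity $x\cdot(m\ot u_g)=xm\ot u_{\gamma g}$ with the $A$-action on each shift $M[l]$, which has the same underlying action as $M$. Naturality in $M$ is immediate from the formula $m\mapsto m\ot u_g$: for $f\colon M\to M'$ in ${}_A\Mod^{H}$ the square relating $\bigoplus_{l}f[l]$ and $\phi^*(\phi_!(f))$ commutes on homogeneous elements.

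Finally, for bijectivity it suffices, since the morphism is homogeneous of trivial degree, to check that it is bijective in each $G$-degree $g$. By the definition of the shift, $M[l]_g$ is (canonically identified with) the homogeneous component of $M$ indexed by $gl$ (resp.\ $lg$), so as $l$ runs over $\hat L$ the index runs bijectively over the coset $g\hat L$, whence $\bigoplus_{l\in\hat L}M[l]_g\cong\bigoplus_{a\in g\hat L}M_a$, and under this identification the map becomes the evident isomorphism onto $\bigl(\bigoplus_{a\in g\hat L}M_a\bigr)\ot u_g=\phi^*(\phi_!(M))_g$. I expect no genuine difficulty here; the only points to keep straight are the shift convention and the normality of $\hat L$ in $G$ (so that $g\hat L=\hat L g$), which is exactly what makes the coset reindexing compatible with the $A$-action.
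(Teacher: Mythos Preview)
Your proof is correct. You compute the cotensor product $\phi_!(M)\square_{\k G'}\k G$ explicitly and then verify bijectivity, $A$-linearity, and homogeneity degree by degree; this is entirely sound.

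The paper takes a slightly different route. At the end of Section~\ref{change-of-grading-functors} it introduces, for general Hopf algebras, the $A$-linear map $\zeta\colon M\ot L\to\phi^*(\phi_!(M))$, $m\ot l\mapsto m_0\ot m_1l$, with left inverse $\eta\colon m\ot g\mapsto m_0\ot S(m_1)g$. The paper's proof of the lemma observes that in the group-algebra case $\eta$ is in fact a two-sided inverse of $\zeta$, so $\phi^*(\phi_!(M))\cong M\ot L$, and then composes with the obvious isomorphism $\bigoplus_{l\in\hat L}M[l]\to M\ot L$, $m\in M[l]\mapsto m\ot u_{l^{-1}}$. One checks that this composite is exactly the map in the statement. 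So the paper factors the map through $M\ot L$ using the Hopf-algebra machinery already set up, whereas you bypass $\zeta$ and $\eta$ entirely and argue directly from the equalizer description of the cotensor product. Your approach is more elementary and self-contained; the paper's approach highlights where the group-algebra case improves on the general Hopf situation (namely, that $\zeta$ becomes invertible) and reuses structure already in place.
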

\begin{proof} In this case the map $\eta: \phi^*(\phi_!(M)) \to M \ot H$ defined at the end of the previous subsection 
is a two-sided inverse of $\zeta$, so $\phi^*(\phi_!(M))
	\cong M \ot L$. Now the map $\bigoplus_{l \in \hat L} M[l] \to M \ot L$ given by $m \in
	M[l] \mapsto m \ot l^{-1} \in M \ot L$ is an isomorphism in ${}_A\Mod^{H}$, and the
	morphism in the statement is the composition of this map with $\zeta$.  \end{proof}

%%%
In the case where $H=\k G$ and $H'=\k G'$ are group algebras, the functor $\phi_*$ has the following concrete description.
Explicitly, if $M$ is a $G$-graded left $A$-module and $h\in G'$,
the homogeneous component of degree $h$ of $\phi_*(M)$
is \begin{align*} \phi_*(M)_h &= \prod_{g \in \phi^{-1}(h)} M_g.  \end{align*} 

We
now define the left action of $A$ on $\phi_*(M)$ in the group algebra case; notice that it
is enough to define the action of a $G$-homogeneous element of $A$ over a $G'$-homogeneous
element of $\phi_*(M)$.  
If $a \in A_g$ and $(m_{g})_{g \in \phi^{-1}(h)} \in
\phi_*(M)_h$, then $a\cdot(m_g)_{g \in \phi^{-1}(h)} = (am_{g})_{g \in \phi^{-1}(h)}$.
Given a morphism $f$ in ${}_A\Mod^{\k G}$, the homogeneous component in degree $h$ of
$\phi_*(f)$ is given by $\phi_*(f)_h = \prod\limits_{g \in \phi^{-1}(h)}f_g$, which is
easily seen to be $A$-linear.

We refer to the functors $\phi_*, \phi^*, \phi_!$ defined in the previous paragraphs as the \emph{change of
grading} functors. As we have seen, they form an adjoint triple, i.e. $\phi^*$ is right
adjoint to $\phi_!$ and left adjoint to $\phi_*$.
These functors are not new. They appear in different guises in
\cite{Doi}, \cite{NV}, \cite{PP}, and probably many other places. However, as far as we
know this is the first time the functor $\phi_*$ is discussed in the context of graded
modules over an algebra.

\begin{remark} 
\label{remark-faltante} 
If $M$ is an object of ${}_A\Mod^{\k G}$, there is a natural isomorphism
	 \[ \phi^* \circ \phi_*(M) \cong \prod_{l \in \ker \phi} M[l]. \]
 \end{remark}

In our setting, the change of grading functors are exact, and also $\phi_*$ reflects
exactness; this does not hold for $\phi^*$ unless $\phi$ is surjective. 

Given $I, M$ of ${}_A\Mod^H$, we recall that $M$ is $I$-{\em acyclic} if $\R^i
\Hom_{A-}^H(M,I) = 0$ for all $i > 0$.

\begin{proposition} 
\label{caso-vdb} 
Assume $A$ is a noetherian $G$-graded $\k$-algebra and let $I$ be an injective object in
${}_A\Mod^{\k G}$.
\begin{enumerate} 
	\item If $M$ is an object of ${}_A\Mod^{\k G}$, then $\phi_!(M)$ is
	$\phi_!(I)$-acyclic.
	\item	Let $L=H^{coH'}$ and $\Theta: {}_L\Mod \to {}_{H}^{H'}\Mod \cong {}_{H}\Mod^{H'}$ be the equivalence of categories such that $\Theta(T) = H \ot_L T$.
	 If $P$ is a projective $L$-module and $N$ is any $G'$-graded $A$-module,
	  then $N\square_{\k G'}\Theta(P)$ is $\phi_!(I)$-acyclic.  
\end{enumerate}
\end{proposition}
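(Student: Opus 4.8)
The plan is to derive statement~(1) from the adjunction $(\phi_!,\phi^*)$ of Proposition~\ref{P:adjoint} together with Lemma~\ref{direct-sum}, and then to bootstrap~(2) out of~(1). For~(1), recall that in the present situation $H=\k G$ is free, hence injective, as an $H'=\k G'$-comodule, so $\phi^*$ is exact and moreover preserves injective objects; in particular it carries injective resolutions in ${}_A\Mod^{H'}$ to injective resolutions in ${}_A\Mod^{H}$. Both categories are Grothendieck, hence have enough injectives, so combining this with the adjunction isomorphism $\Hom_{A-}^{H'}(\phi_!(M),-)\cong\Hom_{A-}^{H}(M,\phi^*(-))$ gives, for every $N\in{}_A\Mod^{H'}$ and every $i\ge 0$,
\[
\R^i\Hom_{A-}^{H'}(\phi_!(M),N)\;\cong\;\R^i\Hom_{A-}^{H}(M,\phi^*(N)).
\]
Taking $N=\phi_!(I)$ and invoking Lemma~\ref{direct-sum}, which identifies $\phi^*(\phi_!(I))$ with $\bigoplus_{l\in\hat L}I[l]$, reduces~(1) to showing that this direct sum is injective in ${}_A\Mod^{H}$. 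Each summand $I[l]$ is a shift of $I$, hence injective because the shift functors are autoequivalences of ${}_A\Mod^{\k G}$; and since $A$ is noetherian the category ${}_A\Mod^{\k G}$ is locally noetherian, so an arbitrary direct sum of injectives is again injective. This is the only point at which the noetherian hypothesis enters, and it is the heart of the argument: $\phi_!(I)$ itself need not be injective in ${}_A\Mod^{H'}$, but its pullback $\phi^*\phi_!(I)$ is injective in ${}_A\Mod^{H}$, which is exactly what forces the higher Ext groups to vanish.

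For~(2) I would reduce to the case $P=L$. Choosing a projective $L$-module $Q$ with $P\oplus Q\cong L^{(X)}$ and using that $\Theta$ is an equivalence (so it commutes with coproducts and preserves direct summands) yields $\Theta(P)\oplus\Theta(Q)\cong\Theta(L)^{(X)}=H^{(X)}$; since $N\square_{\k G'}-$ is additive and commutes with coproducts, $N\square_{\k G'}\Theta(P)$ is a direct summand of $\bigl(N\square_{\k G'}\Theta(L)\bigr)^{(X)}$ in ${}_A\Mod^{H'}$. Next I would check, by comparing underlying $\k$-spaces, $A$-actions and $H'$-coactions, that $N\square_{\k G'}\Theta(L)=N\square_{\k G'}H$ is naturally isomorphic to $\phi_!(\phi^*(N))$ in ${}_A\Mod^{H'}$. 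Part~(1), applied with $M=\phi^*(N)\in{}_A\Mod^{H}$, then shows that $\phi_!(\phi^*(N))$ is $\phi_!(I)$-acyclic. Finally, the class of $\phi_!(I)$-acyclic objects of ${}_A\Mod^{H'}$ is closed under coproducts and direct summands — because $\R^i\Hom_{A-}^{H'}(-,\phi_!(I))$ carries coproducts to products and summands to summands — so the acyclicity of $\phi_!(\phi^*(N))$ propagates to $\bigl(N\square_{\k G'}\Theta(L)\bigr)^{(X)}$ and hence to $N\square_{\k G'}\Theta(P)$.

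The obstacles I expect are technical rather than conceptual. One is to make the isomorphism $\R^i\Hom_{A-}^{H'}(\phi_!(M),N)\cong\R^i\Hom_{A-}^{H}(M,\phi^*(N))$ fully precise, which rests on $\phi^*$ being exact and injective-preserving and thus on the cosemisimplicity of $\k G'$. The other, for~(2), is the identification $N\square_{\k G'}\Theta(L)\cong\phi_!(\phi^*(N))$: one must verify that the $H'$-comodule structure produced by the construction in the proof of Proposition~\ref{resolution} agrees with the $G'$-grading obtained by pushing the $G$-grading of $\phi^*(N)$ forward along $\phi$. This is a short but slightly delicate book-keeping check; once it is in place, everything else is formal manipulation with the adjoint triple and with (co)products.
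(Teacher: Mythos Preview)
Your proposal is correct and follows essentially the same route as the paper. The only cosmetic difference is in how the derived adjunction for~(1) is set up: you pass to injective resolutions in the second variable (using that $\phi^*$ is exact and injective-preserving), whereas the paper passes to projective resolutions in the first variable (using that $\phi_!$ is exact and projective-preserving); in both cases the crux is the identification $\phi^*\phi_!(I)\cong\bigoplus_{l\in\hat L}I[l]$ together with the graded Bass--Papp theorem, and your treatment of~(2) is identical to the paper's.
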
 
\begin{proof} 
By Remark \ref{remark-faltante} and 
the adjunctions between the change of grading functors, there are natural isomorphisms
\[
	\Hom_{A-}^{\k G'}(\phi_!(-),\phi_!(I)) \cong \Hom^{\k G}_{A-}(-,\phi^*(\phi_!(I)) \cong
	\Hom_{A-}^{\k G}(-, \bigoplus_{l \in \hat{L}} I[l]).  
\] 
Since shift functors are autoequivalences of the category ${}_AMod^{\k G}$, they preserve
injectives. Since $A$ is noetherian, the $G'$-graded $A$-module
$\phi^*(\phi_!(I))$ is injective. This follows from the graded analogue of the Bass-Papp Theorem (see for
example the proof in \cite{GW}*{Theorem 5.23}, which adapts easily to the graded case). Therefore the functor 
$\Hom_{A-}^{\k G'}(\phi_!(-),\phi_!(I))$ is exact. On the other hand, $\phi_!$
is an exact functor that sends projective objects to projective objects, so there are
natural isomorphisms $(\R^i\Hom_{A-}^{\k G'})(\phi_!(-),\phi_!(I)) \cong
\R^i(\Hom_{A-}^{\k G'}(\phi_!(-),\phi_!(I)))$, and, since the last functor is identically
zero, this proves the first statement in the proposition.
  
Now, if $P$ is a projective $L$-module, then it is a direct summand of a free $L$-module
$F$, so $N \square_{\k G'} \Theta(P)$ is a direct summand of $N \square_{\k G'}
\Theta(F)$, which in turn is isomorphic to a direct sum of copies of $\phi_!(\phi^*(N))$.
Thus $N \square_{\k G'} \Theta(F)$, being a direct sum of $\phi_!(I)$-acyclic modules, is itself
$\phi_!(I)$-acyclic and so are its direct summands. This proves item 2.
\end{proof}

%%%%%%%%%%%%%%%%

We write $\Ext^{\k G,i}_{A-}(-,M)$ for the $\mathit{i}$th right derived functor of $\Hom^{\k
G}_{A-}(-,M)$.

\begin{theorem} 
\label{P:acyclic} 
Let $A$ be a noetherian $G$-graded $\k$-algebra. Let $n$	be the projective dimension of
$\k G'$ in the category ${}_{\k G}\Mod^{\k G'}$. If $I$ is an injective object in 
${}_A\Mod^{\k G}$, then the injective dimension of $\phi_!(I)$ is	less than or equal to $n$.
\end{theorem}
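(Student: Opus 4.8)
The plan is to show that $\Ext^{\k G',i}_{A-}(N,\phi_!(I)) = 0$ for all $i > n$ and all $N$ in ${}_A\Mod^{\k G'}$; since $\phi_!(I)$ is injective as a $G'$-graded module precisely when this vanishing holds (again by the graded Bass--Papp theorem, one only needs this for $N = A/\mathfrak{a}$ with $\mathfrak{a}$ a graded left ideal, but it is cleaner to prove it for all $N$), this will give the bound on injective dimension directly. The strategy is to compute these Ext groups using a resolution of $N$ by modules that are $\phi_!(I)$-acyclic, rather than by a projective resolution.

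First I would invoke Proposition \ref{resolution}, which applies since in the group algebra case $L = \k\hat L$ is cleft in $H = \k G$ (the section $\gamma$ comes from any set-theoretic section of $\hat\phi$, normalized appropriately) and $H' = \k G'$ is cocommutative. That proposition produces a resolution $S^\bullet \to N$ with $S^i \in \Add(\phi_!(\phi^*(N)))$ and $S^i = 0$ for $i > n$, where $n$ is exactly the projective dimension of $\k G'$ in ${}_{\k G}\Mod^{\k G'}$ — the same $n$ in the statement. Next I would observe that each $S^i$, being a direct summand of a direct sum of copies of $\phi_!(\phi^*(N))$, is $\phi_!(I)$-acyclic: this is precisely the content of Proposition \ref{caso-vdb}(2), since the $S^i$ arising in the proof of Proposition \ref{resolution} are of the form $N \square_{\k G'} P^i$ with $P^i$ a direct summand of a free object in ${}_{\k G}\Mod^{\k G'}$, i.e. of the form $N\square_{\k G'}\Theta(Q)$ for $Q$ projective over $L$; alternatively one appeals directly to Proposition \ref{caso-vdb}(1) after writing $\phi_!(\phi^*(N))$ as a summand of $\phi_!$ of a projective, together with the fact that direct sums of acyclics are acyclic in this Grothendieck category. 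Acyclic resolutions compute right derived functors, so $\Ext^{\k G',i}_{A-}(N,\phi_!(I))$ is the $i$th cohomology of the complex $\Hom^{\k G'}_{A-}(S^\bullet,\phi_!(I))$, which is concentrated in degrees $0 \le i \le n$ and therefore vanishes for $i > n$. This proves $\id_{{}_A\Mod^{\k G'}} \phi_!(I) \le n$.

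I expect the main obstacle to be the bookkeeping needed to legitimately apply the acyclic-resolution formula for derived functors in the contravariant setting: one must check that $S^\bullet \to N$ is a resolution by right $\Hom^{\k G'}_{A-}(-,\phi_!(I))$-acyclic objects and that the general homological machinery (the dual of \cite{Wei}*{2.4.1}, or a standard dimension-shifting argument breaking $S^\bullet$ into short exact sequences) indeed yields $\Ext^{\k G',i}_{A-}(N,\phi_!(I)) \cong H^i(\Hom^{\k G'}_{A-}(S^\bullet,\phi_!(I)))$. There is a small subtlety in that Proposition \ref{caso-vdb} phrases acyclicity as vanishing of $\R^i\Hom^H_{A-}(M,I)$ for the injective object $I$ of the \emph{source} category, whereas here we need acyclicity of the $S^i$ for $\Hom^{\k G'}_{A-}(-,\phi_!(I))$ in the target category; but these are identified by the natural isomorphism $\Hom^{\k G'}_{A-}(\phi_!(-),\phi_!(I)) \cong \Hom^{\k G}_{A-}(-,\phi^*\phi_!(I))$ used in the proof of Proposition \ref{caso-vdb}, so no new input is required. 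Once these identifications are in place the argument is essentially formal, and the length bound $n$ drops out of the length bound on $S^\bullet$.
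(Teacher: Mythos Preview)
Your proposal is correct and follows essentially the same approach as the paper's own proof: invoke Proposition~\ref{resolution} to obtain a length-$n$ resolution $S^\bullet \to N$ by objects of $\Add(\phi_!(\phi^*(N)))$, use Proposition~\ref{caso-vdb}(2) to see that each $S^i$ is $\phi_!(I)$-acyclic, and conclude via the acyclic-resolution computation of $\Ext^{\k G',i}_{A-}(N,\phi_!(I))$. Your additional remarks on why the cleftness and cocommutativity hypotheses of Proposition~\ref{resolution} are satisfied in the group-algebra setting, and on the legitimacy of the acyclic-resolution formula, simply make explicit what the paper leaves implicit.
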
 
\begin{proof} 
By Proposition \ref{resolution} every object $N$ of ${}_A\Mod^{\k G'}$ has a resolution
$S^\bullet$ by objects of $\Add(\phi_!(\phi^*(N)))$. Moreover, this resolution can be
chosen of length smaller than or equal to $n$. By item 2 of Proposition \ref{caso-vdb}, each object of
$\Add(\phi_!(\phi^*(N)))$ is acyclic for the functor $\Hom^{\k G'}_{A-}(-,\phi_!(I))$.
This fact implies that for all $i \geq 0$ there is an isomorphism $\Ext^{\k G',i}_{A-}(N, \phi_!(I)) \cong
H^i(\Hom_A^{\k	G}(S^\bullet, \phi_!(I)))$ for all $i \geq 0$.  In particular $\Ext^{\k
G',i}_{A-}(N,\phi_!(I)) =	0$ for any $N$ and all $i > n$, and the result follows
immediately.
\end{proof}

%%%

\section{ Injective dimension of graded modules} \label{bounds}

The results from the previous section have an easy consequence that we prove next. 

Let $\hat  L$ be, as before, the kernel of $\hat \phi : G\to G'$. Recall that the cohomological dimension of $\hat L$ over $\k$ coincides with the projective
dimension of the trivial $L$-module $\k$.  We denote by $\id^{G}_A M$ the injective
dimension of an object $M$ in ${}_A\Mod^{\k G}$. 

\begin{proposition} 
\label{C:inj-dim} 
If $M$ is an object of ${}_A\Mod^{\k G}$ and $n$ is the cohomological dimension of
$\hat L$, then:
\[ 
	\id^{G}_A M \leq \id^{G'}_A \phi_!(M) \leq \id^{G}_A M + n.  
\] 
\end{proposition}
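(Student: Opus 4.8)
The plan is to prove the two inequalities separately, using the adjoint triple $(\phi_!,\phi^*,\phi_*)$ and the structural results already established.

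\textbf{Upper bound.} For the inequality $\id^{G'}_A \phi_!(M) \leq \id^{G}_A M + n$, let $d = \id^{G}_A M$ and take an injective resolution $M \to I^\bullet$ in ${}_A\Mod^{\k G}$ of length $d$. Applying the exact functor $\phi_!$ gives a resolution $\phi_!(M) \to \phi_!(I^\bullet)$ of length $d$ by $A$-modules which are no longer injective, but each $\phi_!(I^j)$ has injective dimension at most $n$ by Theorem \ref{P:acyclic} (note $n$ is the projective dimension of $\k G'$ in ${}_{\k G}\Mod^{\k G'}$, which is exactly the cohomological dimension of $\hat L$ via the equivalence $\Theta$ and the fact that $\k G'$ as a $\k\hat L$-module is free, so that $\pd_{\k\hat L}\k G' = \pd_{\k\hat L}\k$). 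A standard dimension-shifting / spectral sequence argument (resolving a complex whose terms have injective dimension $\leq n$ by a complex of length $\leq d$) then yields $\id^{G'}_A \phi_!(M) \leq d + n$. Concretely, one can splice injective resolutions of the $\phi_!(I^j)$ into a double complex and read off that $\Ext^{\k G',i}_{A-}(N,\phi_!(M)) = 0$ for $i > d+n$ and all $N$.

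\textbf{Lower bound.} For $\id^{G}_A M \leq \id^{G'}_A \phi_!(M)$, I would use the unit $\iota_M : M \to \phi^*(\phi_!(M))$ of the adjunction $(\phi_!,\phi^*)$ together with Lemma \ref{direct-sum}, which identifies $\phi^*(\phi_!(M)) \cong \bigoplus_{l \in \hat L} M[l]$, with $M = M[1_G]$ appearing as a direct summand; thus $M$ is a direct summand of $\phi^*(\phi_!(M))$ in ${}_A\Mod^{\k G}$. Since $\phi^*$ sends injectives to injectives (Corollary following Proposition \ref{P:adjoint}) and is exact, applying $\phi^*$ to an injective resolution of $\phi_!(M)$ of length $\id^{G'}_A \phi_!(M)$ produces an injective resolution of $\phi^*(\phi_!(M))$ of the same length, whence $\id^{G}_A \phi^*(\phi_!(M)) \leq \id^{G'}_A \phi_!(M)$. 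Finally, the injective dimension of a direct summand is bounded by that of the whole module (an injective resolution of a summand splits off from one of the whole), so $\id^{G}_A M \leq \id^{G}_A \phi^*(\phi_!(M)) \leq \id^{G'}_A \phi_!(M)$.

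\textbf{Main obstacle.} The routine-looking but slightly delicate point is the upper bound: one must carefully pass from ``each term of a bounded resolution has injective dimension $\leq n$'' to a bound on the injective dimension of the object being resolved. This is the familiar fact that if $0 \to M \to X^0 \to \cdots \to X^d \to 0$ is exact with $\id X^j \leq n$ for all $j$, then $\id M \leq d + n$; it follows by induction on $d$ using the long exact sequence in $\Ext$ applied to $0 \to M \to X^0 \to \ker(X^1\to X^2) \to \cdots$, or more cleanly by a hyper-Ext spectral sequence $E_1^{p,q} = \Ext^{q}(N, X^p) \Rightarrow \Ext^{p+q}(N,M)$ which is concentrated in the region $0 \le p \le d$, $0 \le q \le n$. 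I would also need to confirm the identification $n = \pd_{{}_{\k G}\Mod^{\k G'}}\k G' = \pd_{\k\hat L}\k = \operatorname{cd}_\k\hat L$, which uses the monoidal equivalence ${}_{\k G}\Mod^{\k G'}\cong {}_{\k\hat L}\Mod$ and freeness of $\k G'$ over $\k\hat L$; this is implicit in the setup of Section \ref{groups} but worth stating explicitly.
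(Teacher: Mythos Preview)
Your proof is correct and follows essentially the same strategy as the paper's: for the lower bound the paper also passes through $\phi^*(\phi_!(M))\cong\bigoplus_{l\in\hat L}M[l]$ and the fact that $\phi^*$ preserves injectives (computing $\id^G_A$ of the sum directly via the shift autoequivalences rather than via the direct-summand observation, but this is cosmetic), and for the upper bound it carries out exactly the induction on $d=\id^G_A M$ you describe in your ``Main obstacle'' paragraph, with base case Theorem~\ref{P:acyclic}. Your explicit remark that the $n$ of Theorem~\ref{P:acyclic} coincides with $\operatorname{cd}_\k\hat L$ via the equivalence $\Theta$ (sending the trivial $\k\hat L$-module $\k$ to $\k G'$) fills in a step the paper leaves implicit.
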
 
\begin{proof}
Since $\phi^*$ preserves injectives, it follows that $\id^{G}_A \phi^*(\phi_!(M)) \leq
\id^{G'}_A \phi_!(M)$. On the other hand,
we already know that there is a natural isomorphism $\phi^*(\phi_!(M)) \cong \bigoplus_{l
\in \hat L} M[l]$. The shift functors are autoequivalences of the
category ${}_A\Mod^{\k G}$, therefore:
\begin{align*} 
	\id^{G}_A \left( \bigoplus_{l \in \hat L} M[l] \right)  = \sup \{\id^{G}_A M[l] \mid l \in \hat L\} =
	\id^{G}_A M,
\end{align*} 
and the first inequality follows.

The second one is obvious if $\id^{G}_A M$ is infinite, so we assume it is finite and proceed by induction on $d=\id^{G}_A M$. The case $d = 0$ is Theorem
\ref{P:acyclic}. Assume  $d>0$ and let $M\to I$ be a monomorphism into an injective $G$-graded $A$-module; let
$\Omega M$ be its cokernel, which has injective
dimension $d-1$ in ${}_A\Mod^{\k G}$. We get a short exact sequence of $G'$-graded
modules 
\begin{align*} 
	0 \to \phi_!(M) \to \phi_!(I) \to \phi_!(\Omega M) \to 0.
\end{align*} 
Using a standard argument, $\id^{G'}_A \phi_!(M) \leq \max\{\id_A^{\k G}
\phi_!(I), \id^{\k G}_A \phi_!(\Omega M)\} + 1$, which by the inductive hypothesis is
smaller than or equal to $d$. This proves the second inequality.
\end{proof}

The inequalities in the statement of Corollary \ref{C:inj-dim} are sharp, as the following
examples show.  For the first inequality, take $G = \ZZ, G' = \{0\}$ and, of course, $\phi:
\ZZ \to \{0\}$ the trivial morphism.  Set $A = \k[t]$ with the obvious $\ZZ$-grading.
We see that $\id^\ZZ_A A = \id_A A = 1$, thus in this case, for $M=A$, the first
inequality is in fact an equality. On the other hand, $\id^\ZZ_A \k[t,t^{-1}] = 0$, but
$\id_A \k[t,t^{-1}] = 1$, so in this the case the second inequality is an equality.
Incidentally, the case where $G = \ZZ$ and $G' = \{0\}$ was already studied by E. Ekstr\"om,
see \cite{Ek}*{Theorem 0.2}.

Levasseur has proved in \cite{Lev}*{3.3} that if $A$ is noetherian and $\NN$-graded, its injective dimension and its graded
injective dimension are equal. The proof of this result uses a
spectral sequence which is not available if the grading group is not $\ZZ$. It would be
interesting to find a different proof using the change of grading functors, but we have
been unable to do so. We can, however, prove the following result, which holds even if the
algebra $A$ is not noetherian and thus provides a generalization of Levasseur's result.
Recall that an $\NN^n$-graded algebra is a $\ZZ^n$-graded algebra such that the support
$\{ \xi \in \ZZ^n \mid A_\xi \neq 0\}$ is contained in $\NN^n$.

\begin{proposition} 
Suppose $A$ is an $\NN^n$-graded algebra and let $\phi: \ZZ^n \to \ZZ$ be the morphism
defined by $\phi(a_1, \ldots, a_n) = a_1 + \cdots + a_n$. The injective dimension of $A$
in $\Mod^{\ZZ^n}_A$ is equal to its injective dimension in $\Mod^\ZZ_A$.  
\end{proposition}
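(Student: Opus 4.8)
The plan is to prove the two inequalities
$\id^{\ZZ^n}_A A\le\id^{\ZZ}_A\phi_!(A)$ and $\id^{\ZZ}_A\phi_!(A)\le\id^{\ZZ^n}_A A$ separately, using the adjoint triple $\phi_!\dashv\phi^*\dashv\phi_*$ attached to $\phi$. Since $\phi$ is surjective, all three change of grading functors are exact. The whole point is that the positivity hypothesis $\supp A\subseteq\NN^n$ enters in exactly one place, and that this is what lets the argument bypass any noetherian assumption.

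For the first inequality I would essentially rerun the argument of the first half of Proposition \ref{C:inj-dim}, taking a little care because $A$ need not be noetherian. The functor $\phi^*\colon{}_A\Mod^{\k\ZZ}\to{}_A\Mod^{\k\ZZ^n}$ is exact, as $\k\ZZ^n$ is a free (hence injective) $\k\ZZ$-comodule via $\phi$ (the morphism $\phi$ admits a group-theoretic section), and it preserves injective objects, being right adjoint to the exact functor $\phi_!$. Consequently $\id^{\ZZ^n}_A\phi^*(Y)\le\id^{\ZZ}_A Y$ for every object $Y$ of ${}_A\Mod^{\k\ZZ}$, obtained by applying $\phi^*$ to an injective resolution of $Y$. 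Taking $Y=\phi_!(A)$ and using Lemma \ref{direct-sum} to identify $\phi^*(\phi_!(A))\cong\bigoplus_{l\in\ker\phi}A[l]$, together with the fact that $A=A[0]$ is a direct summand of this coproduct (the neutral element lies in $\ker\phi$), I obtain $\id^{\ZZ^n}_A A\le\id^{\ZZ^n}_A\bigl(\bigoplus_{l\in\ker\phi}A[l]\bigr)\le\id^{\ZZ}_A\phi_!(A)$. Note that this uses only the inequality ``injective dimension of a direct summand $\le$ injective dimension of the whole'', and never the statement (false without a noetherian hypothesis) that a coproduct of graded injectives is injective.

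For the reverse inequality I may assume $d:=\id^{\ZZ^n}_A A<\infty$. Here is the one place positivity intervenes: I claim that $\phi_*(A)$ and $\phi_!(A)$ coincide as objects of ${}_A\Mod^{\k\ZZ}$. Indeed, since $A_\xi=0$ unless $\xi\in\NN^n$, for each $h\in\ZZ$ the index set $\{\xi\in\phi^{-1}(h):A_\xi\ne 0\}$ is contained in $\{\xi\in\NN^n:\xi_1+\cdots+\xi_n=h\}$, which is finite (empty if $h<0$); hence the product describing $\phi_*(A)_h=\prod_{\xi\in\phi^{-1}(h)}A_\xi$ equals the coproduct $\phi_!(A)_h=\bigoplus_{\xi\in\phi^{-1}(h)}A_\xi$, and the two $A$-module structures visibly agree. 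Now $\phi_*$ is exact and, being right adjoint to the exact functor $\phi^*$, preserves injective objects; so applying it to an injective resolution $0\to A\to I^0\to\cdots\to I^d\to 0$ in ${}_A\Mod^{\k\ZZ^n}$ yields an injective resolution $0\to\phi_*(A)\to\phi_*(I^0)\to\cdots\to\phi_*(I^d)\to 0$ of $\phi_*(A)=\phi_!(A)$ in ${}_A\Mod^{\k\ZZ}$, whence $\id^{\ZZ}_A\phi_!(A)\le d$. Combining the two inequalities gives the equality.

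The only subtlety to watch for is precisely the behaviour of injective objects under infinite (co)products in a non-noetherian setting: a coproduct of graded injectives need not be graded injective. The first inequality therefore has to be routed through ``being a direct summand'', and the second through the identification $\phi_*(A)=\phi_!(A)$, which holds thanks to $\supp A\subseteq\NN^n$. Everything else is a formal consequence of the exactness of the change of grading functors and of the adjunctions among them.
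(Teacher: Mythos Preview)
Your argument is correct and follows essentially the same route as the paper's proof: the first inequality is the first half of Proposition \ref{C:inj-dim} (you spell out explicitly that only the direct-summand direction is needed, so noetherianity is not used), and the second is obtained exactly as in the paper by observing that the finiteness of $\phi^{-1}(h)\cap\NN^n$ forces $\phi_!(A)=\phi_*(A)$, after which the adjunction $\phi^*\dashv\phi_*$ shows that $\phi_*$ preserves injectives. Your write-up is somewhat more explicit about the non-noetherian subtleties, but the strategy is identical.
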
 
\begin{proof} 
Since we already know that $\id^{\ZZ^n}_A A \leq \id_A^\ZZ A$ by Proposition \ref{C:inj-dim},
we only need to prove the opposite inequality. For every $n \in \NN$ the set
$\phi^{-1}(n) \cap \NN^n$ is finite, so
\begin{align*} 
	\bigoplus_{\xi \in \phi^{-1}(n)} A_\xi = \prod_{\xi \in \phi^{-1}(n)} A_\xi.  
\end{align*} 
It follows from this that $\phi_!(A) = \phi_*(A)$ and, since $\phi_*$ is right adjoint to the exact functor $\phi^*$, it
preserves injectives; in particular, $\id^\ZZ_A \phi_*(A) = \id^\ZZ_A \phi_!(A)$
is at most $\id^{\ZZ^n}_A A$, which completes the proof.
\end{proof}

%%%%%%%%%%%%%%%%%%
\begin{bibdiv}

\begin{biblist}

\bib{Doi}{article}{ author={Doi, Yukio}, title={Homological coalgebra}, journal={J. Math.
Soc. Japan}, volume={33}, date={1981}, number={1}, pages={31--50}, }

\bib{EilMoo}{article}{
   author={Eilenberg, Samuel},
   author={Moore, John C.},
   title={Homology and fibrations. I. Coalgebras, cotensor product and its
   derived functors},
   journal={Comment. Math. Helv.},
   volume={40},
   date={1966},
   pages={199--236},
   issn={0010-2571},
}

\bib{Ek}{article}{ author={Ekstr{\"o}m, Eva Kristina}, title={The Auslander condition on
	graded and filtered Noetherian rings}, conference={ title={ Ann\'ee}, address={Paris},
	date={1987/1988}, }, book={ series={Lecture Notes in Math.}, volume={1404},
		publisher={Springer}, place={Berlin}, }, date={1989}, pages={220--245}, }

\bib{FF}{article}{ author={Fossum, Robert}, author={Foxby, Hans-Bj{\o}rn}, title={The
category of graded modules}, journal={Math. Scand.}, volume={35}, date={1974},
pages={288--300}, }

\bib{GW}{book}{ author={Goodearl, K. R.}, author={Warfield, R. B., Jr.}, title={An
	introduction to noncommutative Noetherian rings}, series={London Mathematical Society
	Student Texts}, volume={61}, edition={2}, publisher={Cambridge University Press,
	Cambridge}, date={2004}, pages={xxiv+344}, }

\bib{Lev}{article}{ author={Levasseur, Thierry}, title={Some properties of noncommutative
regular graded rings}, journal={Glasgow Math. J.}, volume={34}, date={1992}, number={3},
pages={277--300}, }

\bib{M}{book}{ author={Montgomery, Susan}, title={Hopf algebras and their actions on
	rings}, series={CBMS Regional Conference Series in Mathematics}, volume={82},
	publisher={Published for the Conference Board of the Mathematical Sciences, Washington,
	DC}, date={1993}, pages={xiv+238}, }

\bib{NV}{book}{ author={N{\u{a}}st{\u{a}}sescu, Constantin}, author={Van Oystaeyen,
Freddy}, title={Methods of graded rings}, series={Lecture Notes in Mathematics},
volume={1836}, publisher={Springer-Verlag}, place={Berlin}, date={2004}, pages={xiv+304},
}

\bib{PP}{article}{ author={Polishchuk, A.}, author={Positselski, L.}, title={Hochschild
(co)homology of the second kind I}, date={2011}, note={Available at
\url{arixv.org/abs/1010.0982}}, }

\bib{RZ}{article}{ author={Rigal, L.}, author={Zadunaisky, P.}, title={Twisted Semigroup
	Algebras}, journal={Algebra Represent. Th.},
%	date={To appear},
	note={Available at \url{http://arxiv.org/abs/1406.2985}}, }

\bib{Sch90}{article}{ author={Schneider, Hans-J{\"u}rgen}, title={Representation theory of
Hopf Galois extensions}, journal={Israel J. Math.}, volume={72}, date={1990},
number={1-2}, pages={196--231}, }

\bib{vdB}{article}{ author={Van den Bergh, Michel}, title={Existence theorems for
dualizing complexes over non-commutative graded and filtered rings}, journal={J. Algebra},
volume={195}, date={1997}, number={2}, pages={662--679}, }

\bib{Wei}{book}{
   author={Weibel, Charles A.},
   title={An introduction to homological algebra},
   series={Cambridge Studies in Advanced Mathematics},
   volume={38},
   publisher={Cambridge University Press, Cambridge},
   date={1994},
   pages={xiv+450},
   isbn={0-521-43500-5},
   isbn={0-521-55987-1},
   doi={10.1017/CBO9781139644136},
}

\bib{SURVEY}{article}{ author={Wisbauer, Robert}, title={Module and comodule
	categories---a survey}, conference={ title={Proceedings of the Mathematics Conference},
	address={Birzeit/Nablus}, date={1998}, }, book={ publisher={World Sci. Publ., River
	Edge, NJ}, }, date={2000}, pages={277--304}, }

\bib{Yek}{article}{ author={Yekutieli, A.}, title={Another proof of a theorem of Van den
Bergh about graded-injective modules}, date={2014}, note={Available at
\url{http://arxiv.org/abs/1407.5916}}, }

\end{biblist} \end{bibdiv}

%%%
\end{document}